\documentclass[leqno,12pt]{amsart}
\usepackage{geometry, amsmath, amssymb, amsthm, mathrsfs, setspace}
\geometry{a4paper} 
\usepackage[dvipdfmx]{color} 
\usepackage[dvipdfmx]{graphicx}
\usepackage[parfill]{parskip}

\evensidemargin .25in \oddsidemargin .25in 
\theoremstyle{plain}
\newtheorem{theorem}{Theorem}[section]
\newtheorem{lemma}[theorem]{Lemma}
\newtheorem{corollary}[theorem]{Corollary}

\newtheorem{definition}[theorem]{Definition}

\newtheorem*{acknowledge}{Aknowledgments}
\theoremstyle{definition}
\newtheorem{remark}[theorem]{Remark}

\newcommand{\del}{\partial}

\newcommand{\Z}{\mathbb{Z}}

\newcommand{\C}{\mathbb{C}}

\newcommand{\M}{\mathcal{M}}
\newcommand{\Es}{\mathfrak{S}}

\newcommand{\rot}{{\rm{rot}}}
\newcommand{\taub}{\overline{\tau}}

\makeatletter
\@addtoreset{equation}{section}

\makeatother

\begin{document}

\title[Compact Stein surfaces as branched covers]{Compact Stein surfaces as branched covers with same branch sets}

\author[Takahiro Oba]{Takahiro Oba}
\address{Department of Mathematics, Tokyo Institute of Technology, 2-12-1 Ookayama, Meguroku, Tokyo 152-8551, Japan}
\email{oba.t.ac@m.titech.ac.jp}

\begin{abstract}
	Loi and Piergallini showed that a smooth compact, connected $4$-manifold $X$ with boundary admits a Stein structure if and only if 
	$X$ is a simple branched cover of a $4$-disk $D^4$ branched along a positive braided surface $S$ in a bidisk $D_{1}^{2} \times D_{2}^{2} \approx D^4$. 
	For each integer $N \geq 2$, we construct a braided surface $S_{N}$ in $D^4$ 
	and simple branched covers $X_{N,1}, X_{N,2}, \dots , X_{N,N}$ of $D^{4}$ branched along $S_{N}$ such that 
	the covers have the same degrees, and they are mutually diffeomorphic, but the Stein structures associated to the covers are mutually not homotopic.
	Furthermore, by reinterpreting this result in terms of contact topology, 
	for each integer $N \geq 2$,
	we also construct a transverse link $L_{N}$ in the standard contact $3$-sphere $(S^3, \xi_{std})$
	and simple branched covers $M_{N,1}, M_{N,2}, \ldots, M_{N, N}$ of $S^3$ branched along $L_{N}$ 
	such that the covers have the same degrees, and they are mutually diffeomorphic, 
	but the contact structures associated to the covers are mutually not isotopic.
\end{abstract}

\subjclass[2010]{Primary 57M12; Secondary 32Q28, 57R17, 57R65}

\date{\today}

\maketitle

   \section{Introduction.}
   
   \textit{Compact Stein surfaces} are sublevel sets of  exhausting strictly plurisubharmonic functions on Stein manifolds. 
   	They have been studied by using complex and symplectic geometry. 
	For example, Eliashberg \cite{El} characterized handle decompositions of compact Stein surfaces, 
	and Gompf \cite{Go} gave how to draw Kirby diagrams of them. 
	Since early 2000s, compact Stein surfaces also have been examined by using combinatorial techniques, and 
	research on them has been dramatically altered. 
	This development was caused by results of Loi and Piergallini \cite{LP} and Akbulut and Ozbagci \cite{AO}. 
	They showed that a smooth, oriented, connected, compact $4$-manifold $X$ with boundary admits a Stein structure $J$ if and only if 
	$X$ admits a \textit{positive allowable Lefschetz fibration} $f: X \rightarrow D^2$ (see Section \ref{section: LF}).
	It is known that Lefschetz fibrations are studied through mapping class groups, 
	so group theoretical approaches of them help us to treat compact Stein surfaces.
	For example, by using such techniques, 
	uniqueness results for Stein fillings of contact $3$-manifolds were proven in \cite{PV, Kal, KL, Ob}.
	For more various results, we refer the reader to \cite{Oz} as a survey on this subject.
	
	Loi and Piergallini also showed that a smooth, oriented, connected, compact 
	$4$-manifold $X$ with boundary admits a Stein structure $J$ if and only if 
	$X$ is a simple branched cover of a $4$-disk $D^4$ branched along a \textit{positive braided surface} $S$ in a bidisk $D_{1}^{2} \times D_{2}^{2}$ 
	(see Definition \ref{def: braided surfaces} and \ref{def: positive braided surfaces}),
	where, by rounding the corner of $D_{1}^{2} \times D_{2}^{2}$, it is identified with $D^{4}$.
	Unfortunately, although the fact is well-known, little is known about how Stein structures behave towards positive braided surfaces. 
	We can describe braided surfaces by using combinatorial tools 
	such as chart descriptions, quandles, and braid monodromies (cf. \cite{Ka}). 
	In order to use them effectively for research on compact Stein surfaces, 
	we need to better understand interactions between Stein structures and braided surfaces.
	
	In this paper, we consider 	whether or not, for a given positive braided surface $S$, 
	there exist more than one compact Stein surfaces as covers of $D^{4}$ branched along $S$ 
	such that the covers have the same degrees, 
	and they are mutually diffeomorphic but admit mutually distinct Stein structures. 
	The following theorem is a positive answer to this problem.
	
	\begin{theorem}\label{thm: main}
            	For a given integer $N \geq 2$, there exist a positive braided surface $S_{N}$ 
            	and simple branched covers $X_{N,1}, X_{N,2}, \dots, X_{N,N}$ of $D^4$ branched along $S_{N}$ such that
            	\begin{enumerate}
			\item the degrees of these covers are same, 
                    	\item $X_{N,1}, X_{N,2}, \dots, X_{N,N}$ are mutually diffeomorphic, and 
                    	\item Stein structures $J_{N,1}, J_{N,2}, \dots, J_{N,N}$ on $X_{N,1}, X_{N,2}, \dots, X_{N,N}$ respectively, 
                    	which are associated to the covers, are mutually not homotopic. 
            	\end{enumerate}
         \end{theorem}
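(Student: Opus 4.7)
My plan is to translate Theorem~\ref{thm: main} into the language of positive allowable Lefschetz fibrations (PALFs) via the Loi--Piergallini/Akbulut--Ozbagci correspondence, and then exhibit $N$ monodromy representations lifting a single positive braided surface to PALFs whose total spaces are smoothly identified but whose Stein structures have pairwise distinct first Chern classes. Concretely, a simple $d$-fold cover of $D^4$ branched over a positive braided surface $S$ with braid monodromy $w=\sigma_{i_1}\cdots \sigma_{i_k}$ is determined by a transposition-valued representation $\rho:\pi_1(D_1^2\setminus\{n \text{ points}\})\to \Es_d$, and the cover is naturally a PALF whose generic fiber $F_\rho$ is the $d$-fold cover of $D_2^2$ along $n$ points and whose vanishing cycles $\gamma_1^{(\rho)},\dots,\gamma_k^{(\rho)}\subset F_\rho$ are the lifts of the positive half-twists in $w$. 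Thus building $S_N$ and the $\rho_{N,j}$ is equivalent to building one braid word together with $N$ branched-covering data on its base.

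First I would fix a positive braid word $w$ (and hence $S_N$) designed so that, for a chosen degree $d=d(N)$, there are $N$ distinct simple representations $\rho_{N,1},\dots,\rho_{N,N}$ whose branch data all produce the \emph{same} cover fiber $F=F_{\rho_{N,j}}$ up to diffeomorphism. The cleanest source of such families is to pick the $\rho_{N,j}$ so that they are related by permutations of the $d$ sheets that are compatible with a global self-diffeomorphism of $F$; a symmetric choice (e.g.\ a cyclic shift of the roles of the sheets over a subword of $w$) acts on the ordered tuples of vanishing cycles by a fiber mapping class followed by Hurwitz moves. This simultaneously forces clause~(2) of the theorem, because two PALFs whose ordered vanishing-cycle tuples agree up to a global mapping class and Hurwitz moves have diffeomorphic total spaces.

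Next I would verify clause~(3) by computing $c_1(J_{N,j})$ from the standard PALF formula: realize each $X_{N,j}$ as a Stein handlebody in which every $\gamma_i^{(\rho_{N,j})}$ is the attaching circle of a Legendrian $2$-handle with rotation number $r_i^{(j)}\in \Z$; then $c_1(J_{N,j})$ is Poincar\'e dual to $\sum_i r_i^{(j)}[\Sigma_i]$, where $\Sigma_i$ is the core disk of the $i$-th handle. The rotation number of a lift $\gamma_i^{(\rho_{N,j})}$ is determined by how the lift of $\sigma_i$ traverses branch points in $F_{\rho_{N,j}}$, so the same braid generator yields different rotation numbers under different sheet-labelings. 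By tuning $w$ so that the $r$-vectors $(r_1^{(j)},\dots,r_k^{(j)})$ for $j=1,\dots,N$ pair differently with the remaining $H_2$-relations coming from the PALF, I would extract $N$ pairwise distinct elements of $H^2(X_{N,j};\Z)$ under any identifying diffeomorphism, ruling out homotopy between the $J_{N,j}$.

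The main obstacle is tightening clauses (2) and (3) together: I have to ensure that the identifications producing mutual diffeomorphism do \emph{not} also identify the first Chern classes. I would attack this by building $S_N$ as a stabilization of a simpler positive braided surface $S_N^0$, where the stabilizing half-twists are deployed symmetrically across the $N$ representations (matching the handle decompositions after cancellations and slides in a Kirby-calculus verification), while the rotation-number contributions of the unstabilized core $S_N^0$ remain asymmetric across the $\rho_{N,j}$. A useful reference model is Gompf's construction of Stein structures on a fixed $4$-manifold with distinct $c_1$; my task is essentially to reverse-engineer such a Legendrian handle diagram as a simple branched cover of $D^4$ along a single braided surface, which is precisely where the degree-of-freedom provided by varying $\rho$ inside a fixed $w$ will be exploited.
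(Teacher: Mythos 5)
There is a genuine gap, and it sits exactly where you flag the ``main obstacle'': your proposed mechanism for clause (2) is incompatible with clause (3). You plan to make the $N$ covers diffeomorphic by choosing the sheet-labelings $\rho_{N,j}$ so that the resulting ordered tuples of vanishing cycles agree up to a global mapping class of the fiber and Hurwitz moves. But that is precisely an equivalence of PALFs, and equivalent PALFs induce homotopic Stein structures on the (identified) total space; in particular the first Chern classes are carried to one another by the very diffeomorphism you have constructed. So any diffeomorphism obtained this way identifies the $J_{N,j}$ and kills clause (3). Your proposed fix --- stabilizing a core surface $S_N^0$ symmetrically while keeping the core's rotation-number contributions asymmetric --- does not escape this: if the \emph{full} vanishing-cycle data are related by mapping class plus Hurwitz moves, the evaluation of $c_1$ on $H_2$ is an invariant of that equivalence class, asymmetry of a subword notwithstanding. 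What is needed is a source of diffeomorphism between the $X_{N,j}$ that does \emph{not} respect the fibration or contact data. The paper gets this from an external smooth classification: after stabilizing, the boundary open books of the $X_{N,j}$ are identified with supporting open books of the $N$ pairwise non-isotopic tight contact structures $\xi_{N,1},\dots,\xi_{N,N}$ on $L(2N,1)$, and the Plamenevskaya--Van Horn-Morris classification of Stein fillings of these contact lens spaces forces every $X_{N,j}$ to be diffeomorphic to the disk bundle $X(S^2,-2N)$ (with a separate Euler-characteristic check for $(N,j)=(2,1)$). Without an input of this kind your argument cannot simultaneously deliver (2) and (3).

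Two secondary points. First, a simple branched cover of $D^4$ along $S_N$ is determined by a representation of $\pi_1(D^4 - S_N)$ into $\Es_d$, not merely by branch data on the fiber disk: the covering monodromy on the meridians must be compatible with the relations imposed by the braid monodromy, which amounts to requiring that every braid in the monodromy of $S_N$ be liftable through each of the $N$ coverings of the punctured fiber disk (the paper isolates this as Lemma \ref{lemma: lift}, using the Montesinos--Morton liftability criterion). Your sketch treats the existence of the $N$ covers as automatic, but this compatibility is a real constraint that the braids must be engineered to satisfy. Second, your method for clause (3) --- evaluating $c_1(X_{N,j},J_{N,j})$ via rotation numbers of the lifted vanishing cycles against a generator of $H_2$ --- agrees in outline with the paper and is sound once the covers exist; the paper's computation yields $\langle c_1, Z_{N,j}\rangle = -2(N-j)$, which pins down the $N$ distinct homotopy classes.
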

	In the above theorem, 
	we consider as a Stein structure on the branched cover 
	one given by a Lefschetz fibration associated to the branched covering 
	(see Remark \ref{remark: Stein structures}). 
	 
	This theorem become more interesting, compared with 
	the case of 
	branched covers of $\mathbb{CP}^{2}$ and 
	cuspidal curves in $\mathbb{CP}^{2}$. 
	Here, a cuspidal curve is a projective plane curve 
	whose singular points are ordinary nodes and ordinary cusps.
	Chisini's conjecture (see {\cite{Chis}}) claims that 
	if $S \subset \mathbb{CP}^2$ is a cuspidal curve, 
	a \textit{generic} branched covering of $\mathbb{CP}^2$
	whose branch set is $S$ and degree is at least $5$ is unique up to covering isomorphism.
	Kulikov {\cite{Kul, Kul2}} showed that this conjecture is true under certain conditions. 
	The degree of each simple branched covering we will constructed 
	in the proof of Theorem \ref{thm: main} is $3N-1$ for each $N \geq 2$. 
	In addition, according to Rudolph \cite{Ru2}, 
	a positive braided surface is isotopic to the intersection of a complex analytic 
	curve with $D^4 \subset \C^2$, 
	and the converse is also true.
	Hence, an analogue of Chisini's conjecture does not hold for simple branched coverings of $D^4$ whose branch sets are 
	the intersections of complex analytic curves with $D^4$.

	We can reinterpret Theorem \ref{thm: main} in terms of contact $3$-manifolds and transverse links. 
	Let $M$ be an oriented, connected, closed $3$-manifold. 
	A $2$-plane field $\xi$ on $M$ is called a \textit{contact structure} on $M$ if there exists a $1$-form on $M$ such that 
	$\xi = \textrm{Ker} (\alpha)$ and $\alpha \wedge d\alpha >0$ with respect to the orientation of $M$, 
	and the pair $(M, \xi)$ is called a \textit{contact manifold}.
	An oriented link $L$ in $(M, \xi)$ is called a \textit{transverse link} if 
	$L$ is transverse to the contact plane $\xi_{x}$ at any point  $x$ in $L$. 
	Write $(D^2, id)$ for a \textit{supporting open book decomposition} of the standard contact $3$-sphere $(S^3, \xi_{std})$
	(cf. \cite{Et} for instance).
	Bennequin \cite{Be} showed that any transverse link in $(S^3, \xi_{std})$ can be braided about the binding of $(D^2, id)$.
	
	\begin{corollary}\label{cor: transverse}
            	For a given integer $N \geq 2$, there exist a transverse link $L_{N}$ in $(S^3, \xi_{std})$
            	and simple branched covers $M_{N, 1}, M_{N, 2}, \dots, M_{N, N}$ of $S^3$ branched along $L_{N}$ such that
                        	\begin{enumerate}
				\item the degrees of these covers are same, 
                        		\item $M_{N,1}, M_{N,2}, \dots, M_{N,N}$ are mutually diffeomorphic, and 
                        		\item contact structures $\xi_{N,1}, \xi_{N,2}, \dots, \xi_{N,N}$ on $M_{N,1}, M_{N,2}, \dots, M_{N,N}$ respectively, 
                        		which are associated to the covers, are mutually not isotopic. 
                        	\end{enumerate}	
	\end{corollary}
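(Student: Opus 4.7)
The plan is to deduce Corollary \ref{cor: transverse} from Theorem \ref{thm: main} by restricting the branched covers to the boundary. Given $(S_N, X_{N,1}, \ldots, X_{N,N})$ from Theorem \ref{thm: main} and the associated simple branched coverings $p_{N,i}\colon X_{N,i} \to D^4$, I would set $L_N := \partial S_N \subset S^3 = \partial D^4$ and $M_{N,i} := \partial X_{N,i}$. Because $S_N$ is a braided surface in the bidisk $D_1^2 \times D_2^2$, its boundary $L_N$ is a closed braid about the binding of the standard open book $(D^2, \mathrm{id})$ of $S^3$, so by Bennequin's theorem it becomes a transverse link in $(S^3, \xi_{std})$ after an arbitrarily small transverse isotopy. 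Restricting $p_{N,i}$ to the boundary yields simple branched covers $M_{N,i} \to S^3$ branched along $L_N$, each of the same degree as $p_{N,i}$, which establishes (1); and diffeomorphic $4$-manifolds have diffeomorphic boundaries, establishing (2).

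For (3), the contact structure $\xi_{N,i}$ associated with the cover $M_{N,i} \to S^3$ is the Giroux-supporting contact structure of the open book decomposition of $M_{N,i}$ induced by the positive allowable Lefschetz fibration on $X_{N,i}$ coming from $p_{N,i}$; equivalently, $\xi_{N,i}$ is the field of complex tangencies $TM_{N,i} \cap J_{N,i} TM_{N,i}$ on $\partial X_{N,i}$. The key point is that the invariants used in the proof of Theorem \ref{thm: main} to distinguish the Stein structures $J_{N,i}$ up to homotopy should be boundary-detectable: first Chern classes restrict via $c_1(\xi_{N,i}) = c_1(J_{N,i})|_{M_{N,i}}$, induced spin$^c$ structures restrict accordingly, and Hopf-type $d_3$ invariants of the boundary plane field can be read off the handle/Lefschetz data of $X_{N,i}$ via Gompf's formulas. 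Whichever invariant separates the $J_{N,i}$ in Theorem \ref{thm: main} should therefore already separate the $\xi_{N,i}$ up to isotopy, giving (3).

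The hardest step is guaranteeing that the separating invariant of Theorem \ref{thm: main} genuinely descends to a non-trivial boundary invariant. \emph{A priori}, two Stein structures on the same $4$-manifold can be non-homotopic while inducing isotopic boundary contact structures, so a purely interior distinguishing invariant would not suffice to prove the corollary. I therefore expect the proof of Theorem \ref{thm: main} to be arranged so that the non-homotopy of the $J_{N,i}$ is already witnessed on $\partial X_{N,i}$---for instance, by constructing the open book monodromies associated with the branched covers so that the resulting supporting contact structures have mutually distinct $d_3$ invariants (or distinct induced spin$^c$ structures). Once that is in place, Corollary \ref{cor: transverse} becomes essentially an immediate reinterpretation of the same boundary computation, with Bennequin's theorem supplying the transverse-link viewpoint.
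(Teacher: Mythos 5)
Your reduction to the boundary is the same as the paper's: set $L_{N}=\del S_{N}$, $M_{N,j}=\del X_{N,j}$, observe that $\del S_{N}$ is a closed braid about the binding of the disk open book of $S^{3}$ and hence transverse in $(S^{3},\xi_{std})$, and restrict the coverings $p_{N,j}$ to the boundary; items (1) and (2) then come for free (the paper notes moreover that each $M_{N,j}$ is $L(2N,1)$). Where you diverge is item (3). The paper does \emph{not} argue by restricting an interior invariant: in the proof of Theorem \ref{thm: main} it is already shown that the boundary open book $(\Sigma_{0,3N-1},\psi_{N,j})$ induced by the PALF $pr_{1}\circ p_{N,j}$ is a positive stabilization of the explicit open book $(\Sigma_{0,2N},\varphi_{N,j})$ supporting $\xi_{N,j}$, where $\xi_{N,j}$ is the tight structure on $L(2N,1)$ obtained by Legendrian surgery on the unknot $O_{N,j}$ with $tb=-2N+1$ and $\rot=2(N-j)$. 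The corollary then follows by quoting Eliashberg--Fraser and Honda's classification of tight contact structures on $L(2N,1)$: the $\xi_{N,j}$ are pairwise non-isotopic by construction. So the ``hardest step'' you flag is indeed discharged inside the theorem's proof, but by an open-book identification rather than by a Chern-class restriction.

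Your alternative route via $c_{1}(\xi_{N,j})=c_{1}(J_{N,j})|_{M_{N,j}}$ is salvageable but, as written, has a gap you yourself half-acknowledge: the interior computation gives $\langle c_{1}(X_{N,j},J_{N,j}),Z_{N,j}\rangle=-2(N-j)$, distinct integers in $H^{2}(X_{N,j};\Z)\cong\Z$, but the restriction lands in $H^{2}(L(2N,1);\Z)\cong\Z/2N\Z$, so distinctness of the integers does not automatically give distinctness of the boundary classes. One must check that the values $-2(N-j)$, $j=1,\dots,N$, remain pairwise distinct modulo $2N$ (they do, being $0,-2,\dots,-2(N-1)$), and one must also fix compatible identifications of the $M_{N,j}$ with $L(2N,1)$ so that the comparison of classes in $\Z/2N\Z$ is meaningful. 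Without these verifications, ``whichever invariant separates the $J_{N,j}$ should already separate the $\xi_{N,j}$'' is a hope, not a proof; in general non-homotopic Stein structures can induce isotopic boundary contact structures, exactly as you note. The paper's route through Honda's classification avoids both issues.
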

	Here, a contact structure on a branched cover means one supported 
	by an open book associated to the branched covering. 
	
	This article is organized as follows: 
	In Section $2$, we review some definitions and properties of mapping class groups, 
	braided surfaces, positive Lefschetz fibrations and 
	supporting open book decompositions.
	In Section $3$, 	first, we observe braids satisfying a certain condition, 
	called liftable braids, and, by using this notion, 
	prove a lemma to construct branched covers of $D^4$ in the proof of Theorem \ref{thm: main}. 
	Next, we review how to evaluate $\langle c_{1}(X,J), \cdot \, \rangle$, where $c_{1}(X, J)$ is 
	the first Chern class of a compact Stein surface $(X, J)$.
	Finally, we prove Theorem \ref{thm: main} by using contact structures, PALFs and Kirby diagrams coupled with the above lemma.
	
   	Throughout this article we will work in the smooth category.
   	We assume that the reader is familiar with basics of Kirby diagrams (see \cite[Chapter $4$, $5$]{GS}). 
	
    	\begin{acknowledge}
                \rm{The author would like to express his gratitude to Professor Hisaaki Endo 
            	for his encouragement and helpful comments for the draft of this article. 
		He would also like to thank Burak Ozbagci for his helpful comments and fruitful discussions. 
            	The author was partially supported by JSPS KAKENHI Grant Number 15J05214.}
   	 \end{acknowledge}

   \section{Preliminaries.}

\subsection{Mapping class groups}\label{section: mcg}

 	Let $\Sigma_{g, r}^{k}$ be an oriented, connected genus $g$ surface with $k$ marked points and $r$ boundary components. 
	We denote the mapping class group of $\Sigma_{g,r}^{k}$ by $\M_{g,r}^{k}$.
	More precisely, $\M_{g,r}^{k}$ is the group of isotopy classes of orientation preserving 
	self-diffeomorphisms of $\Sigma_{g,r}^{k}$
	which fix the marked points setwise and the boundary pointwise. 
	We also use the notations $\M _{g, r}$ if $k=0$, and 
	$\M_{\Sigma_{g, r}^{k}}$ for $\M_{g,r}^{k}$.
	For a simple closed curve $\alpha$ in $\Sigma_{g, r}^{k}$, $t_{\alpha} \in \M_{g, r}^{k}$ denotes 
	the \textit{right-handed Dehn twist} along $\alpha$.
	Furthermore, for a simple arc $a$ connecting two distinct marked points in $\Sigma_{g,r}^{k}$, 
	write $\tau_{a} \in \M_{g, r}^{k}$ for 
	the \textit{right-handed half-twist} along $a$.
	We will use the opposite notation to the usual functional one for the products in $\M_{g, r}^{k}$, 
	i.e. $h_{1} h_{2} $ means that we apply $h_{1}$ first and then $h_{2}$. 
	Moreover, for a subset $A \subset \Sigma_{g,r}^{k}$ and $h \in \M_{g,r}^{k}$, 
	the notation $(A)h$ means the image of $A$ under $h$.
	
	It is well known that the braid group $B_{m}$ on $m$ strands 
	can be identified with the mapping class group $\M_{0,1}^{m}$ as follows
	(cf. \cite[Section 3.2]{EV}): 
	Consider an $m$-marked disk $\Sigma_{0,1}^{m}$ 
	as the unit closed disk $\mathbb{D}_{m} \subset \C$ with $m$ marked points which lie on the real axis.
	Set $P_{1}, P_{2}, \cdots, P_{m}$ as the $m$ marked points, 
	where $P_{1} < P_{2} < \cdots < P_{m}$.
	Define an arc $A_{i}$ on the real axis to be one with end points in $P_{i}$ and $P_{i+1}$.
	Then, the $i$-th standard generator $\sigma_{i}$ of $B_{m}$ 
	can be identified with the right-handed half-twist $\tau_{A_{i}}$. 
	In this article, under this identification, 
	a simple arc with end points in the set of marked points 
	represents the corresponding element of $B_{m}$ to the half-twist along the arc.	 
   
   \subsection{Braided surfaces.}\label{section: braided surfaces}
   
    	Let $D^2_{1}$ and $D^{2}_{2}$ be oriented $2$-disks.
    
    	\begin{definition}\label{def: braided surfaces}
                	A properly embedded surface $S$ in  $D^{2}_{1} \times D^{2}_{2}$ is called a (simply) \textit{braided surface} of degree $m$ 
                	if the first projection $pr_{1}: D^{2}_{1} \times D^{2}_{2} \rightarrow D^{2}_1$ restricts 
            	to a simple branched covering $p_{S} := pr_{1} | S : S \rightarrow D^{2}_{1}$ of degree $m$.
    	\end{definition}
    
    	We will review briefly braid monodromies of braided surfaces 
    	(see, for more details, \cite[Section 3]{APZ}, \cite[Chapter 16, 17]{Ka}, \cite[\S 1, 2]{Ru}).
    	Before that, we recall a special basis for the fundamental group of a punctured disk.
    	Let $Q$ be a set of $n$ points $x_{1}, x_{2}, \dots, x_{n}$ in the interior of an oriented $2$-disk $D^{2}$ with the standard orientation and 
	let $x_{0}$ be a point in $\del D^{2}$. 
	Since the fundamental group $\pi_{1}(D^{2} - Q, x_{0})$ is a free group of rank $n$, 
	we give a basis for this group as follows: 
	Take a collection of oriented paths $s_{1}, s_{2}, \dots, s_{n}$ 
	starting from $x_{0}$ to each $x_{i}$, respectively. Assume that $s_{i}$ and $s_{j}$, if $i \neq j$, are disjoint except $x_{0}$, 
	and the arcs $s_{1}, s_{2}, \dots, s_{n}$ are indexed so that they appear in order as we move counterclockwise about $x_{0}$. 
	By using the path $s_{i}$, 
	connect $x_{0}$ to a small oriented disk around each $x_{i}$ with the same orientation of $D^2$. 
	Then, we obtain an oriented loop $\gamma_{i}$ based at $x_{0}$, 
	and $ \gamma_{1}, \gamma_{2}, \dots , \gamma_{n} $ freely generate $\pi_{1}(D^{2}-Q, x_{0})$.  
    	The ordered $n$-tuple $(\gamma_{1}, \gamma_{2}, \dots, \gamma_{n})$ is called a \textit{Hurwitz system} 
    	for $(Q, x_{0})$ (see Figure \ref{fig: Hurwitz}).
    	
	\begin{figure}[ht]
		\begin{center}
			\includegraphics[width=130pt]{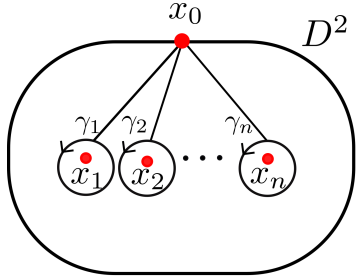}
			\caption{The standard Hurwitz system for $(Q, x_{0})$.}
			\label{fig: Hurwitz}
		\end{center}
	\end{figure}
	
	We now turn to the case of braided surfaces.
	Let $Q(p_{S}): = \{ a_{1}, a_{2}, \dots , a_{n} \} \subset \rm{Int}\, D_{1}^2$ be 
	the set of branch points of the branched covering $p_{S} : S \rightarrow D^{2}_{1}$. 
	Fix a point $a_{0}$ in $\del D^{2}_{1}$ and Hurwitz system $(\gamma_{1}, \gamma_{2}, \dots, \gamma_{n})$ for $(Q(p_{S}), a_{0})$.
	For each $\gamma_{i}$, the restriction of $pr_{1}$ to $pr_{1}^{-1} (\gamma_{i})$ induces a trivial disk bundle over $\gamma_{i}$.
	Since for any point $a \in \gamma_{i} \subset D_{1}^{2}- Q(p_{S})$, $p_{S}^{-1}(a)$ consists of $m$ points,
	each fiber $pr_{1}^{-1} (a) = \{ a \} \times D_{2}^{2} =: D_{2}^{2}(a)$ of the disk bundle contains $m$ points, 
	which are the intersection points of $D_{2}^{2}(a_{0})$ and $S$.
	Hence, we associate an element $\beta_{i} \in B_{m}$ 
	to $\gamma_{i}$ as a motion of the set $D_{2}^{2} (a_{0}) \cap S$ over $\gamma_{i}$.
	By this correspondence, we can define a homomorphism $\omega_{S}: \pi_{1}(D_{1}^{2}-Q(p_{S})) \rightarrow B_{m}$ by 
	$\omega_{S} (\gamma_{i}) = \beta_{i}$ for each $i$. 
	This homomorphism $\omega_{S}$ is called a \textit{braid monodromy} of $S$.
	The ordered $n$-tuple $(\omega_{S}(\gamma_{1}), \omega_{S}(\gamma_{2}), \dots, \omega_{S}(\gamma_{n}))$ 
	is also called a braid monodromy of $S$.
	Since $p_{S}$ is a simple branched covering, 
	each $\omega_{S} (\gamma_{i})$ is a conjugate $w_{j}^{-1} \sigma_{j_{i}}^{\varepsilon_{i}} w_{j}$ of $\sigma_{j_{i}}^{\varepsilon_{i}}$ 
	for some $w_{j} \in B_{m}$ and $\varepsilon_{i} \in \{\pm 1 \}$. 
	It is known that, for a finite set $Q$ and representation $\omega: \pi_{1}(D^2_{1} - Q, a_{0}) \rightarrow B_{m}$ as above, 
	we can construct a braided surface of degree $m$ whose branch set is $Q$ and braid monodromy is $\omega$.
	Obviously, since $p_S$ is a branched covering, we consider a \textit{covering monodromy} of $p_S$, 
	i.e. a representation $\rho_{S} : \pi_{1}(D^{2}_{1}- Q(p_{S}) , a_{0}) \rightarrow \Es_{m}$, 
	where $\Es_{m}$ is the symmetric permutation group of degree $m$.
	Note that each $\rho_{S} (\gamma_{i}) \in \Es_{m}$ is a transposition because $p_S $ is simple. 	
	Furthermore, we also remark that $\omega_{S}$ is a lift of $\rho_{S}$ to $B_{m}$.
		
	At the end of this subsection, 
	we define a crucial notion to examine compact Stein surfaces by braided surfaces.
	
	\begin{definition}\label{def: positive braided surfaces}
            	A braided surface $S$ is called \textit{positive} if each $\omega_{S} (\gamma_{i}) $ is positive, that is, 
            	for a braid monodromy 
            	$(w_{1}^{-1} \sigma^{\varepsilon_{1}}_{j_{1}} w_{1}, w_{2}^{-1} \sigma^{\varepsilon_{2}}_{j_{2}} w_{2}, $ 
		$\dots, w_{n}^{-1} \sigma^{\varepsilon_{n}}_{j_{n}} w_{n})$ of $S$, 
            	each $\varepsilon_{i}$ is $1$.
	\end{definition}

\subsection{Lefschetz fibrations and simple branched coverings. }\label{section: LF}

	We will briefly review positive Lefschetz fibrations and their monodromies (see \cite[Chapter 8]{GS}).
	
	Let $X$ be an oriented, connected, compact $4$-manifold.
	
   	\begin{definition}\label{def: LF}
		A smooth map $f:X\rightarrow D^2$ is called a \textit{positive Lefschetz fibration} if there exists the set $Q (f)$ of points
		$a_{1}, a_{2}, \dots , a_{n}$ of the interior of $D^2$ such that
	
		\begin{enumerate}
			\item $f|f^{-1}(D^2-Q (f) ):f^{-1}(D^2 -Q (f) ) \rightarrow D^2- Q (f)$
				is a smooth fiber bundle over $D^2- Q (f)$ with fiber diffeomorphic to an oriented compact surface $\Sigma$ with boundary,
			\item $ a_{1}, a_{2}, \dots , a_{n} $ are the critical values of $f$, and each singular fiber $f^{-1}(a_{i})$ has
				 a unique critical point $p_{i} \in f^{-1}( a_{i})$, and 
			\item for each $p_{i}$ and $a_{i}$, there are local complex coordinate charts with respect to the orientations of $X$ 
				and $D^2$ such that locally f can be written as $f(z_{1},z_{2}) = z_{1}^2 + z_{2}^2$.
		\end{enumerate}
  	 \end{definition}
   
   	A positive Lefschetz fibration $f:X\rightarrow D^2$ can be described by the mapping class group $\M_{\Sigma}$ of the fiber $\Sigma$ of $f$. 
	Let $a_{0} \in \del D^2$ be a fixed base point. 
	Take a Hurwitz system $(\gamma_{1}, \gamma_{2}, \dots , \gamma_{n})$ for $(Q (f), a_{0})$. 
	We can consider a homomorphism $\eta_{f}: \pi_{1}(D^2-Q (f), a_{0}) \rightarrow \M_{\Sigma}$ as follows: 
	The positive Lefschetz fibration $f$ restricts to 
	a fiber bundle $f| f^{-1}(\gamma_{i}): f^{-1}(\gamma_{i}) \rightarrow \gamma_{i}$ for each $\gamma_i$.
	The monodromy of this fiber bundle is the right-handed Dehn twist $t_{\alpha_{i}}$ along a simple closed curve $\alpha_{i}$ in $f^{-1}(a_{0}) \approx \Sigma$.
	The simple closed curve $\alpha_{i}$ is called a \textit{vanishing cycle} of the singular fiber $f^{-1}(a_{i})$.
	Define $\eta_{f}: \pi_{1}(D^2-Q (f), a_{0}) \rightarrow \M_{\Sigma}$ by $\eta_{f}(\gamma_{i}) = t_{\alpha_{i}}$ for each $\gamma_{i}$
	and call $\eta_{f}$ a \textit{monodromy} of $f$. 
	We also call the ordered $n$-tuple $(t_{\alpha_{1}}, t_{\alpha_{2}}, \dots, t_{\alpha_{n}})$ a monodromy of $f$.
	We say a positive Lefschetz fibration to be \textit{allowable} if all of the vanishing cycles $\alpha_{1}, \alpha_{2}, \dots, \alpha_{n}$
	are homologically non-trivial in the fiber. 
	After this, we call a positive allowable Lefschetz fibration a \textit{PALF} shortly.
   
   	The following theorem tells us that PALFs and positive braided surfaces 
	are so important to study compact Stein surfaces.
   
        \begin{theorem}[Loi and Piergallini {\cite[Theorem 3]{LP}} (cf. Akbulut and Ozbagci {\cite[Theorem 5]{AO}})]\label{thm: LP}
        Let $X$ be an oriented, connected, compact $4$-manifold with boundary.
        Then the following conditions are equivalent:
                	\begin{enumerate}
                    \item $X$ is a compact Stein surface, that is, $X$ admits a Stein structure $J$; 
                    \item $X$ admits a PALF $f: X \rightarrow D^2$; 
                    \item $X$ is a simple branched cover of $D^{4}$ branched along a positive braided surface in $D_{1}^{2} \times D_{2}^{2} \approx D^4$.
                \end{enumerate}
         \end{theorem}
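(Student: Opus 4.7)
The plan is to split the triple equivalence into two pieces: the geometric correspondence $(2)\Leftrightarrow(3)$, which I will derive directly from the first projection $pr_{1}:D^{2}_{1}\times D^{2}_{2}\to D^{2}_{1}$, and the handle-theoretic equivalence $(1)\Leftrightarrow(2)$, which comes from Eliashberg's characterization of Stein handle decompositions together with its refinements by Gompf and by Akbulut-Ozbagci. The degree $m$ of the branched covering in $(3)$ corresponds to the realization of the PALF fiber $\Sigma$ in $(2)$ as a simple branched cover of a $2$-disk of the same degree.

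For $(3)\Rightarrow(2)$, I would let $\pi: X \to D^{2}_{1}\times D^{2}_{2}$ be a simple branched cover of degree $m$ along a positive braided surface $S$, and set $f:=pr_{1}\circ\pi:X\to D^{2}_{1}$. Over a regular value $a\in D^{2}_{1}\setminus Q(p_{S})$, the fiber $f^{-1}(a)$ is the simple $m$-fold branched cover of $D^{2}_{2}(a)$ branched at $S\cap D^{2}_{2}(a)$, so $f$ restricts to a fiber bundle with fiber a compact surface $\Sigma$ with boundary over the complement of $Q(p_{S})$. Over a branch value $a_{i}$, two fiberwise branch points collide along an arc realizing the half-twist $w_{i}^{-1}\sigma_{j_{i}}w_{i}$; because the exponent is $+1$, the local model is the standard quadratic singularity $z_{1}^{2}+z_{2}^{2}$, and the local monodromy is the right-handed Dehn twist $t_{\alpha_{i}}$ about the simple closed curve $\alpha_{i}\subset\Sigma$ obtained by lifting the arc under $p_{S}$. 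Allowability is then forced by an initial stabilization of $S$ that adds trivial unbranched sheets, raising the genus of $\Sigma$ so that every vanishing cycle becomes homologically essential.

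For $(2)\Rightarrow(3)$, given a PALF $f: X\to D^{2}$ with fiber $\Sigma$ and monodromy $(t_{\alpha_{1}},\dots,t_{\alpha_{n}})$, I would realize $\Sigma$ as a simple branched cover $q:\Sigma\to D^{2}_{2}$ of some degree $m$. The central step is a combinatorial lemma showing that, after isotopy and possibly further stabilization of both $\Sigma$ and $q$, each $\alpha_{i}$ can be written as $q^{-1}(A_{i})$ for some embedded arc $A_{i}\subset D^{2}_{2}$ joining two branch points of $q$. Under the identification $B_{m}\cong\M_{0,1}^{m}$, each Dehn twist $t_{\alpha_{i}}$ then lifts the positive half-twist $\tau_{A_{i}}$, so the ordered tuple $(\tau_{A_{1}},\dots,\tau_{A_{n}})$ is the braid monodromy of a positive braided surface $S\subset D^{2}_{1}\times D^{2}_{2}$, and its associated simple $m$-fold branched cover is diffeomorphic to $X$ by construction.

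For the handle-theoretic side $(1)\Leftrightarrow(2)$, Gompf's Legendrian realization shows that the $2$-handles of the PALF can be attached along Legendrian vanishing cycles with framing $tb-1$, so Eliashberg's theorem provides the Stein structure for $(2)\Rightarrow(1)$; conversely, Akbulut-Ozbagci's argument starts from a Stein handle decomposition and arranges the $1$- and $2$-handles to form the page and vanishing cycles of a PALF, giving $(1)\Rightarrow(2)$. The main obstacle is the combinatorial step inside $(2)\Rightarrow(3)$: arranging \emph{every} vanishing cycle simultaneously to be the preimage of a single branched-cover arc is the technical heart of Loi-Piergallini's argument and requires careful quasipositive stabilization moves in the sense of Rudolph, introducing new branch points and matching trivial sheets, to bring the whole collection of vanishing cycles into the standard form dictated by the half-twist description.
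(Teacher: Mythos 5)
The paper contains no proof of this statement: Theorem \ref{thm: LP} is quoted directly from Loi--Piergallini \cite[Theorem 3]{LP} (cf.\ Akbulut--Ozbagci \cite[Theorem 5]{AO}), and the only proof-adjacent material in the text is the remark immediately following it, recording the correspondence $f = pr_{1} \circ p$ between simple branched coverings over positive braided surfaces and PALFs. So there is no in-paper argument to compare yours against; I can only measure your outline against the sources it reconstructs. Your architecture --- $(2)\Leftrightarrow(3)$ via the first projection and $(1)\Leftrightarrow(2)$ via Eliashberg, Gompf and Akbulut--Ozbagci --- is the standard one, and your description of $(3)\Rightarrow(2)$ (regular fibers are the fiberwise simple branched covers of $D_{2}^{2}(a)$, positivity of each half-twist $w_{i}^{-1}\sigma_{j_{i}}w_{i}$ gives the local model $z_{1}^{2}+z_{2}^{2}$ and a right-handed Dehn twist along the closed lift of the branch arc) is correct and matches what the paper records after the theorem.

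As a proof, however, the proposal has genuine gaps. First, the two hard implications are attributed rather than argued: for $(2)\Rightarrow(3)$ you correctly isolate the key step --- putting \emph{all} vanishing cycles $\alpha_{1},\dots,\alpha_{n}$ simultaneously into the form $q^{-1}(A_{i})$ for arcs $A_{i}$ between branch points of a single covering $q:\Sigma\to D_{2}^{2}$ --- but you only assert that this ``requires careful quasipositive stabilization moves'' without specifying the moves, why they preserve the total space, or why the process terminates; this is precisely the technical content of \cite[Propositions 1 and 2]{LP} and needs an actual argument (likewise $(1)\Rightarrow(2)$ is wholly deferred to \cite{AO}). Second, your treatment of allowability in $(3)\Rightarrow(2)$ is off: adding trivial unbranched sheets to the covering adjoins disjoint disk sheets to the fiber (and a disjoint piece to the total space), so it does not raise the genus of $\Sigma$ and cannot make a null-homologous vanishing cycle essential. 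No such modification is invoked in the paper, which, following \cite{LP}, asserts that $pr_{1}\circ p$ is already a PALF; if you want allowability in your write-up you should prove (or cite) that the closed lift of an arc joining two branch points of a simple cover is homologically essential, rather than introduce a stabilization that does not accomplish what you need.
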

         
	Note that according to \cite[Proposition 1, 2]{LP} and the proof of Theorem \ref{thm: LP}, 
	for a given PALF $f:X \rightarrow D^2$, 
	we can construct a simple branched covering $p: X \rightarrow D^4$ whose branch set is a positive braided surface $S$ so that 
	$f = pr_{1} \circ p$ and $Q(p_{S})=Q(f)$.
	Conversely, for a given simple branched covering $p: X \rightarrow D^4$ whose branch set is a positive braided surface $S$, 
	$f := pr_{1} \circ p : X \rightarrow D^{2}_{1}$ is a PALF, and $Q(f)=Q(p_S)$ (cf. Figure \ref{fig: BSandLF}).
	Suppose $a \in D_{1}^{2}$ is a regular point of the above PALF $f= pr_{1} \circ p$.
   	The point $a$ is also a regular point of $p_{S}$.
   	Since $p$ is a simple branched covering branched along $S$, 
   	$p$ restricts to a simple branched covering $p| p^{-1}( D_{2}^{2} (a) ) : p^{-1}(D_{2}^{2} (a)) \rightarrow D_{2}^{2} (a)$ 
   	whose branch set is $S \cap D_{2}^{2} (a)$. 
   	It is easy to check that $p^{-1}(D_{2}^{2} (a))$ is the regular fiber $f^{-1}(a)$ of $f$.
   
  	 \begin{remark}\label{remark: Stein structures}
               	It is known that the total space of a PALF admits a Stein structure by using the handle decomposition given by the PALF
            	 (see. \cite[Theorem 5]{AO}).
               	As mentioned above, a simple branched covering $p: X \rightarrow D^4 \approx D_{1}^{2} \times D_{2}^{2}$ 
            	branched along a positive braided surface gives 
               	a PALF $pr_{1} \circ p: X \rightarrow D_{1}^{2}$. 
               	Thus, we equip the cover $X$ with the Stein structure coming from the PALF $pr_{1} \circ p$. 
   	\end{remark}
   
   	\begin{figure}[ht]
		\begin{center}
			\includegraphics[width=250pt]{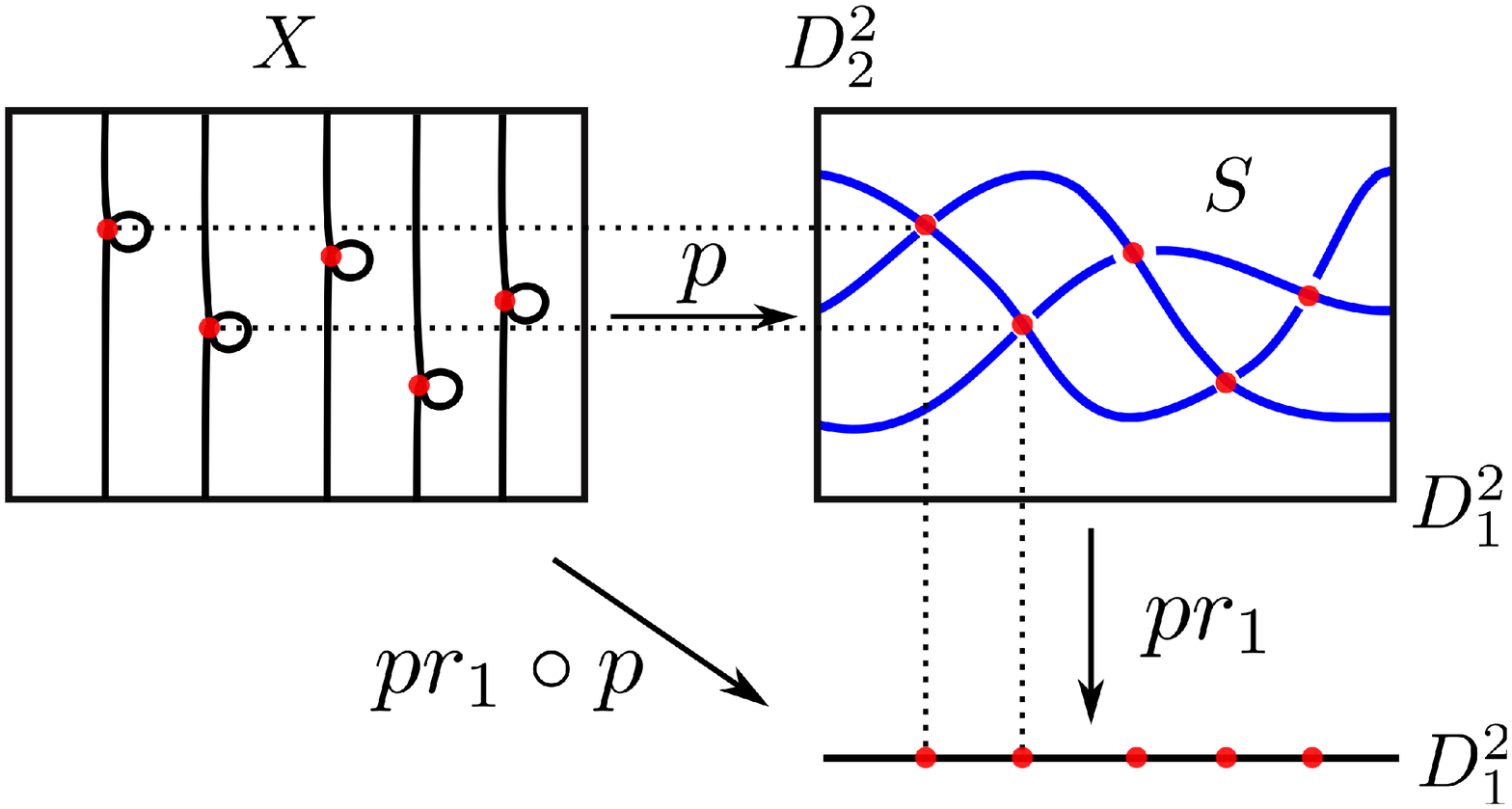}
			\caption{The left (resp. right) square represents the total space $X$ of $p$ (resp. $D_{1}^{2} \times D_{2}^{2}$).
			The red points in $X$ (resp. $D_{1}^{2} \times D_{2}^{2}$) represents the critical points of the PALF $pr_{1} \circ p$ 
			(resp. the branched covering $p_{S}$).}
			\label{fig: BSandLF}
		\end{center}
	\end{figure}

\subsection{Supporting open book decompositions of tight lens spaces $L(2N, 1)$}\label{section: OB}
		
	In order to show Theorem \ref{thm: main}, we will discuss contact structures on the lens space $L(2N, 1)$ via open books.
	Hence, we review contact structures on $L(2N, 1)$ and their supporting open book decompositions (see \cite{Et}, \cite[Section 2]{PV} for more details).
	
	To begin with, we recall a stabilization of a Legendrian knot.
	Let $L$ be a Legendrian knot in $(S^3, \xi_{std})$.
	A \textit{positive} (resp. \textit{negative}) \textit{stabilization} on $L$ is a Legendrian knot $L_{+}$ (resp. $L_{-}$) 
	obtained from adding a zig-zag to $L$ as depicted in the left (resp. right) of Figure \ref{fig: stabilization}.
	If $L$ lies on a page of a supporting open book decomposition of $(S^3, \xi_{std})$, 
	we stabilize the open book and modify $L$ as shown in the bottom of Figure \ref{fig: stabilization}.
	
	\begin{figure}[ht]
		\begin{center}
			\includegraphics[width=300pt]{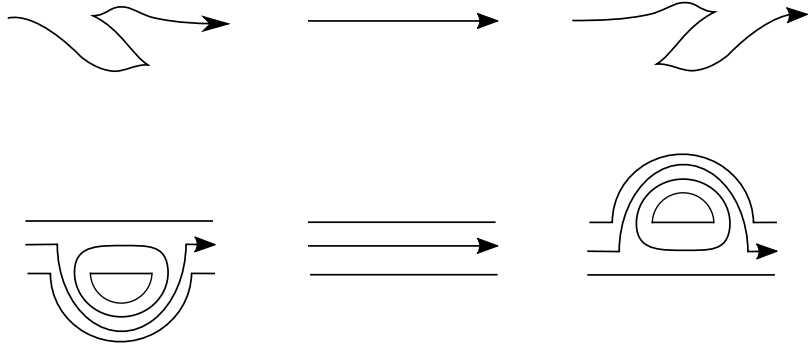}
			\caption{The stabilizations $L_{+}$ and $L_{-}$ of a Legendrian knot $L$ and the corresponding open books.}
			\label{fig: stabilization}
		\end{center}
	\end{figure}
	
	Let $(L(2N,1), \xi_{N,j})$ be a tight contact manifold obtained from the Legendrian surgery  on 
	the Legendrian knot $O_{N,j}$ shown in Figure \ref{fig: Legendrian knot}. 
	Since $O_{N,j}$ is a Legendrian knot with $tb= -2N+1$ and $\rot=2(N-j)$, 
	$O_{N,1}, O_{N,2}, \dots , O_{N,N}$ are mutually not Legendrian isotopic by \cite[THEOREM 1.1]{EF}.
	Thus, according to Honda's classification of tight contact structures on $L(2N, 1)$ (see \cite[Theorem 2.1]{Ho}), 
	$\xi_{N,1}, \xi_{N,2}, \dots, \xi_{N,N}$ are mutually not isotopic tight contact structures. 
	To obtain a supporting open book decomposition of $(L(2N,1), \xi_{N,j})$, 
	we explain how $O_{N,j}$ is obtained from the Legendrian unknot $O$ with $tb=-1$.
	Repeat $j-1$ times positively and negatively stabilizing $O$ alternately and,
	after that, perform $2(N-j)$ times negatively stabilizing the resulting Legendrian knot.
 	Hence, the corresponding open book decomposition of $(L(2N,1), \xi_{N,j})$ is given 
	as in Figure \ref{fig: L(2N,1)}. 
	We write $(\Sigma_{0, 2N}, \varphi_{N, j})$ for this open book, 
	where the monodromy $\varphi_{N,j}$ is given by 
	\[
		\varphi_{N,j} = t_{\alpha_{N,j}} t_{\beta_{N,j}} t_{\delta_{2}} t_{\delta_{3}} \cdots t_{\delta_{2N-1}}.
	\]
		
   	\begin{figure}[htb]
		\begin{center}
			\includegraphics[width=200pt]{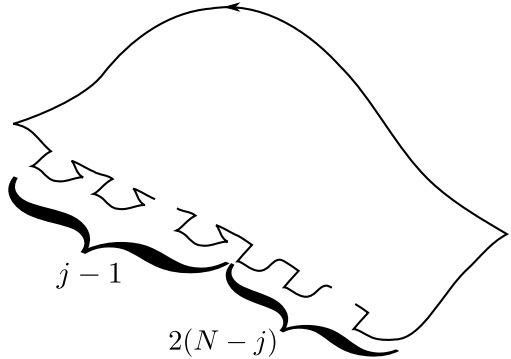}
			\caption{Legendrian knot $O_{N,j}$.}
			\label{fig: Legendrian knot}
		\end{center}
	\end{figure}

	\begin{figure}[htb]
		\begin{center}
			\includegraphics[width=350pt]{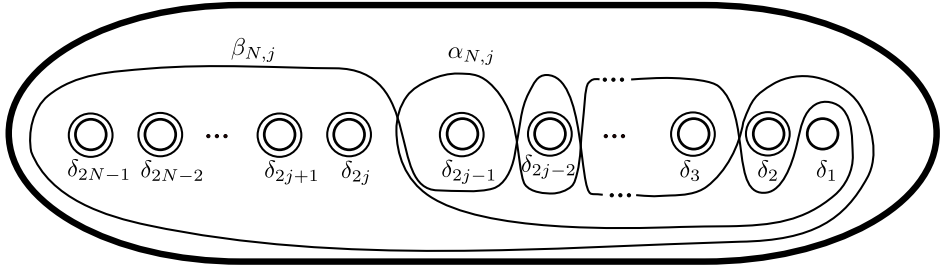}
			\caption{Supporting open book decomposition of $(L(2N,1), \xi_{N,j})$ .}
			\label{fig: L(2N,1)}
		\end{center}
	\end{figure}

  \section{Main Results.}\label{section: main}
   
        Let $\Sigma$ be an oriented, connected, compact surface with boundary.
        Suppose $q: \Sigma \rightarrow D^2$ is a simple branched covering of degree $d$.
        This covering $q$ determines a covering monodromy $\rho_{q}: \pi_{1} (D^2 - Q (q), b_{0}) \rightarrow \Es_{d}$.
        Identifying $B_{m}$ with $\M_{0,1}^{m}$ as in Section \ref{section: mcg}, 
        we associate with a given $\beta \in B_{m}$ the mapping class $h_{\beta} \in \M_{0,1}^{m}$.
        	We call $\beta \in B_{m}$ or $h_{\beta} \in \M_{0,1}^{m}$ 
	\textit{liftable} with respect to the branched covering $q: \Sigma \rightarrow D^2$ with $m$ branch points 
	if there exists an orientation preserving diffeomorphism $H_{\beta}$ of $\Sigma$ 
	such that $ q \circ H_{\beta} = h \circ q$ for some representative $h$ of $h_{\beta}$. 
	Note that, in this definition,  
	we identify with $\mathbb{D}_{m}$ the base disk $D^{2}$ with $m$ branch points 
	 and consider $\M_{0,1}^{m}$ as the mapping class group of $\mathbb{D}_{m}$. 
	In \cite[Lemma $4.3.3$]{MM}, it is shown that, if $h_{\beta} \in \M_{0,1}^{m}$ is liftable with respect to $q$, 
	then we have 
	\begin{align}\label{eq: liftability}
	\rho_{q} \circ {h_{\beta}}_{*} = \rho_{q}
	\end{align} 
	for 
	the induced isomorphism ${h_{\beta}}_{*}: \pi_{1}(D^2 - Q(q), \beta_{0}) \rightarrow \pi_{1}(D^2 - Q(q), b_{0})$.
	
    	The following lemma is useful to construct simple branched covers of $D^4$. 
    
    	\begin{lemma}\label{lemma: lift}
                	Let $S$ be a positive braided surface of degree $m$ with braid monodromy 
            	$(w_{1}^{-1}\sigma_{j_{1}}w_{1}, w_{2}^{-1}\sigma_{j_{2}}w_{2}, \dots, w_{n}^{-1}\sigma_{j_{n}}w_{n})$ and 
            	let $a_{0}$ be a fixed base point in $\del D_{1}^{2}$.
                	Suppose $q: \Sigma \rightarrow D_{2}^{2} (a_{0})$ is a simple branched covering of degree $d$ 
            	with branch set $S \cap D_{2}^{2} (a_{0})$  and covering monodromy $\rho_{q}$.
            	If each $w_{i}^{-1}\sigma_{j_{i}}w_{i} \in B_{m}$ is liftable with respect to $q$, 
                	then there exist an oriented, connected, compact $4$-manifold $X$ and 
            	a simple branched covering $p: X \rightarrow D^{4}$ branched along $S$ such that $p| p^{-1}(D_{2}^{2}(a_{0})) = q$.
   	\end{lemma}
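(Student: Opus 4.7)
The plan is to reconstruct the desired cover $p: X \to D^4$ as the branched covering classified by a covering monodromy $\rho : \pi_1(D^4 \setminus S) \to \Es_d$ that extends the given $\rho_q$. The argument proceeds in three stages: first, present $\pi_1(D^4 \setminus S)$ via a Zariski--Van Kampen type theorem applied to $S$; second, use the liftability hypothesis to check that $\rho_q$ kills the resulting Van Kampen relators and hence descends to a representation $\rho$ on $\pi_1(D^4 \setminus S)$; third, take $p$ to be the branched cover classified by $\rho$ and verify that its restriction over $D_2^2(a_0)$ is $q$.

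For the first stage, since $pr_1 : D^4 \setminus S \to D_1^2$ restricts to a locally trivial bundle over $D_1^2 \setminus Q(p_S)$ with fiber $D_2^2$ punctured at $m$ points, the standard braid monodromy presentation yields
\[
\pi_1(D^4 \setminus S, b_0) \;\cong\; \bigl\langle\, x_1, \dots, x_m \,\big|\, \beta_i(x_k)\, x_k^{-1},\; i=1,\dots,n,\ k=1,\dots,m \,\bigr\rangle,
\]
where $x_1, \dots, x_m$ are the Hurwitz meridians of $S \cap D_2^2(a_0)$ based at $b_0$ generating $F_m = \pi_1(D_2^2(a_0) \setminus (S \cap D_2^2(a_0)), b_0)$, and each braid $\beta_i = w_i^{-1}\sigma_{j_i} w_i$ acts on $F_m$ via the standard Artin representation.

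For the second stage, under the identification $B_m \cong \M_{0,1}^m$ of Section \ref{section: mcg}, the Artin action of $\beta_i$ on $F_m$ coincides with the automorphism $(h_{\beta_i})_*$ induced on $\pi_1$ by a representative of $h_{\beta_i}$ fixing $b_0$. The liftability hypothesis on each $\beta_i$ combined with identity (\ref{eq: liftability}) therefore gives
\[
\rho_q\bigl(\beta_i(x_k)\bigr) \;=\; \rho_q\bigl((h_{\beta_i})_*(x_k)\bigr) \;=\; \rho_q(x_k)
\]
for every $i$ and $k$, so $\rho_q$ is trivial on every Van Kampen relator. Hence $\rho_q$ factors through the desired homomorphism $\rho : \pi_1(D^4 \setminus S, b_0) \to \Es_d$, and since $\rho_q$ sends each meridian $x_k$ to a transposition, so does $\rho$.

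Finally, the Fox completion of the $d$-fold unbranched cover of $D^4 \setminus S$ classified by $\rho$ produces an oriented, compact $4$-manifold $X$ together with a simple branched covering $p : X \to D^4$ whose branch set is $S$. By construction the restriction of $p$ over $D_2^2(a_0)$ is also classified by $\rho_q$, so it can be identified with the given $q : \Sigma \to D_2^2(a_0)$; in particular connectedness of $\Sigma$ forces the image of $\rho_q$, hence of $\rho$, to be transitive, so $X$ is connected. The main subtlety is the bookkeeping in the second stage --- aligning base points, Hurwitz systems, and orientation conventions so that the Artin braid action on $F_m$ really does agree with the $(h_{\beta_i})_*$ appearing in identity (\ref{eq: liftability}). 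Once these identifications are pinned down, the rest of the argument is essentially formal.
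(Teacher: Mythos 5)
Your proof is correct and follows essentially the same route as the paper: present $\pi_{1}(D^{4}-S)$ via the braid monodromy, use liftability together with identity (\ref{eq: liftability}) to show $\rho_{q}$ kills the relators and descends to $\rho$, and take the branched cover classified by $\rho$. The only difference is cosmetic --- you use the full Zariski--Van Kampen relators $\beta_{i}(x_{k})x_{k}^{-1}$ while the paper uses the equivalent reduced presentation with one relation $(\gamma'_{j_{i}})w_{i*}=(\gamma'_{j_{i}+1})w_{i*}$ per branch point --- and you additionally spell out the connectedness of $X$, which the paper leaves implicit.
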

	   
  	\begin{proof}
   		Fix a point $b_{0} \in \del D_{2}^2$. 
   		Let $( \gamma'_{1}, \gamma'_{2}, \dots , \gamma'_{m} )$ be the standard Hurwitz system 
		for $ (D_{2}^{2} (a_{0} - S), (a_{0}, b_{0}) )$ as in Figure \ref{fig: Hurwitz}.
   		It is known that 
   		\[
			\pi_{1} (D^4 - S, (a_{0}, b_{0})) = \langle \gamma'_{1}, \gamma'_{2}, \dots, \gamma'_{m}  | 
			 (\gamma'_{j_{i}}) {w_{i}}_{*} = (\gamma'_{j_{i}+1}) {w_{i}}_{*} \ i = 1, 2, \dots n \rangle , 
		\]
		where each ${w_{i}}_{*}$ is the Artin automorphism of the free group $\langle \gamma'_{1}, \gamma'_{2}, \dots \gamma'_{m}  \rangle$ 
		defined by 
		\[
			(\gamma'_{j}) {\sigma_{i}}_{*} =
				\begin{cases}
					\gamma'_{i} \gamma'_{i+1} {\gamma'}_{i}^{-1} &  (j=i), \\
					\gamma'_{i} & (j=i+1), \\
					\gamma'_{j} & (j \neq i, i+1).
				\end{cases}
		\]
            	More precisely we refer the reader to \cite[p.133]{Fo} and \cite[PROPOSITION $4.1$]{Ru} about this fundamental group.
            	If we show $\rho_{q} ( (\gamma'_{j_{i}}) {w_{i}}_{*}) = \rho_{q} ((\gamma'_{j_{i}+1}) {w_{i}}_{*})$ for each $i$, 
            	we conclude that $\rho_{q}$ induces a homomorphism $\rho: \pi_{1} (D^4 - S, (a_{0}, b_{0})) \rightarrow \Es_{d}$, and 
            	this $\rho$ determines a simple branched covering $p:X \rightarrow D^4$ of degree $d$ whose branch set is $S$.
            
            	For each $i$, we have 
            	\begin{eqnarray}
            		(\gamma'_{j_{i}}) {w_{i}}_{*}& =  & ((\gamma'_{j_{i}+1}) {\sigma_{j_{i}}}_{*} ) {w_{i}}_{*} \nonumber \\
            		& = & (\gamma'_{j_{i}+1}) \{ {w_{i}}_{*}  (w_{i})^{-1}_{*}\}  {\sigma_{j_{i}}}_{*}  {w_{i}}_{*} \nonumber \\
            		& = & ((\gamma'_{j_{i}+1}) {w_{i}}_{*} ) ( w_{i}^{-1}{\sigma_{j_{i}}}w_{i})_{*} .\nonumber 
            	\end{eqnarray}
            	Since each $w_{i}^{-1}\sigma_{j_{i}}w_{i}$ is liftable and the equation (\ref{eq: liftability}) holds for any liftable braid, 
            	\begin{eqnarray}
            		\rho_{q} ( (\gamma'_{j_{i}}) {w_{i}}_{*} ) & =  & \rho_{q} (((\gamma'_{j_{i}+1}) {w_{i}}_{*} ) (w_{i}^{-1}{\sigma_{j_{i}}}w_{i})_{*})  \nonumber \\
			& = & ( \rho_{q} \circ   (w_{i}^{-1} {\sigma_{j_{i}}}w_{i})_{*})  ((\gamma'_{j_{i}+1}) {w_{i}}_{*}  ) \nonumber \\
			& = & \rho_{q} ( (\gamma'_{j_{i} +1}) {w_{i}}_{*} ). \nonumber
		\end{eqnarray}
		According to the above construction of $p$, we can easily check $p | p^{-1} (D_{2}^{2} (a_{0})) = q$.
	
   	\end{proof}

   	In the proof of Theorem \ref{thm: main}, we use the first Chern class of a compact Stein surface.
   	In order to compute them, 
	we make use of the following facts in \cite[Section 3]{EO} and \cite[Proposition 2.3]{Go}.
   	Let $f: X \rightarrow D^{2}$ denote a PALF with fiber $\Sigma$ and let 
	$\alpha_{1}, \alpha_{2}, \dots, \alpha_{n}$ be the its vanishing cycles.
	Note that $X$ admits a Stein structure $J$ by Theorem \ref{thm: LP}.
	Once choosing a trivialization of the regular fiber of $f$, 
	the rotation number $\rot (C)$ of a simple closed curve $C$ is defined with respect to this trivialization.
	Give an orientation to a vanishing cycle $\alpha_{i}$ and regard this $\alpha_{i}$ 
	as a generator $[\alpha_{i}]$ of the chain group $C_{2}(X)$ (cf. \cite[Section 4.2]{GS}). 
	Then, we have for the first Chern class $c_{1}(X, J)$ of $(X, J)$, $$\langle c_{1}(X, J), [\alpha_{i}] \rangle = \rot (\alpha_{i}).$$

   	\begin{proof}[Proof of Theorem \ref{thm: main}]
    	
                	Fix $a_{0}$ in $\del D_{1}^{2}$ and $b_{0}$ in $\del D_{2}^{2} (a_{0})$.
               	At first, in order to construct a braided surface, 
            	we give elements $\beta_{1, N}, \beta_{2, N}, \beta_{3, i}, \beta_{4, i}, \beta_{5, i}$ of $B_{8(N-1)}$ for $i=1,2, \dots, N-1$ 
		as depicted in Figure \ref{fig: braids}. 
		 After this proof, we give explicit braid words of these braids.
		 Now we define a braided surface $S_{N}$ of degree $6N-4$ to be one with braid monodromy 
            	\begin{align}\label{braid monodromy}
            		(\beta_{1, N}, \beta_{2, N}, \beta_{3, 1}, \beta_{3, 2}, \dots, \beta_{3, N-1}, \beta_{4, 1}, \beta_{4, 2}, \dots, \beta_{4, N-1}, 
            		\beta_{5, 1}, \beta_{5, 2}, \dots, \beta_{5, N-1}).
            	\end{align}
	
		\begin{figure}[h]
            		\begin{center}
            			\includegraphics[width=450pt]{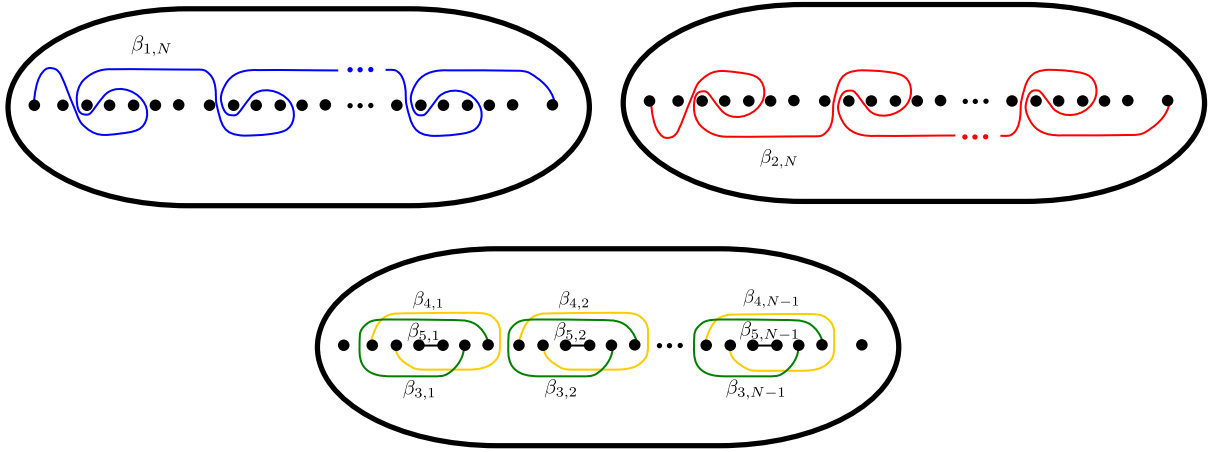}
            			\caption{Generating arcs of $\beta_{1, N}, \beta_{2, N}, \beta_{3, i}, \beta_{4, i}, \beta_{5, i} \in B_{8(N-1)}$ as elements of $\M_{0,1}^{m}$.}
            			\label{fig: braids}
            		\end{center}
            	\end{figure}

		In order to use Lemma \ref{lemma: lift}, 
		we need to construct appropriate simple branched covers of $D_{2}^{2}(a_{0})$.
		Define a simple branched covering 
		$q_{N,j}: \Sigma_{0,3N-1} \rightarrow D_{2}^{2}(a_{0})$ of degree $3N-1$ for each $j=1,2,\dots, N$ 
		as shown in Figure \ref{fig: covering}.
		After this proof, by using a covering monodromy, 
		we will describe this covering more explicitly.
		According to \cite[Lemma 3.2.3]{MM} for example, 
		we can check that each braid of the tuple (\ref{braid monodromy}) 
		is liftable with respect to each covering $q_{N,j}$ (see Figure \ref{fig: covering}). 
		It follows from Lemma \ref{lemma: lift} that for each covering $q_{N,j}$, 
		there exists a simple branched covering $p_{N, j}: X_{N,j} \rightarrow D^4$ branched along $S_{N}$ 
		such that $p_{N,j} | p_{N,j}^{-1}(D_{2}^{2}(a_{0})) = q_{N,j}$.
            	\begin{figure}[ht]
            		\begin{center}
            			\includegraphics[width=300pt]{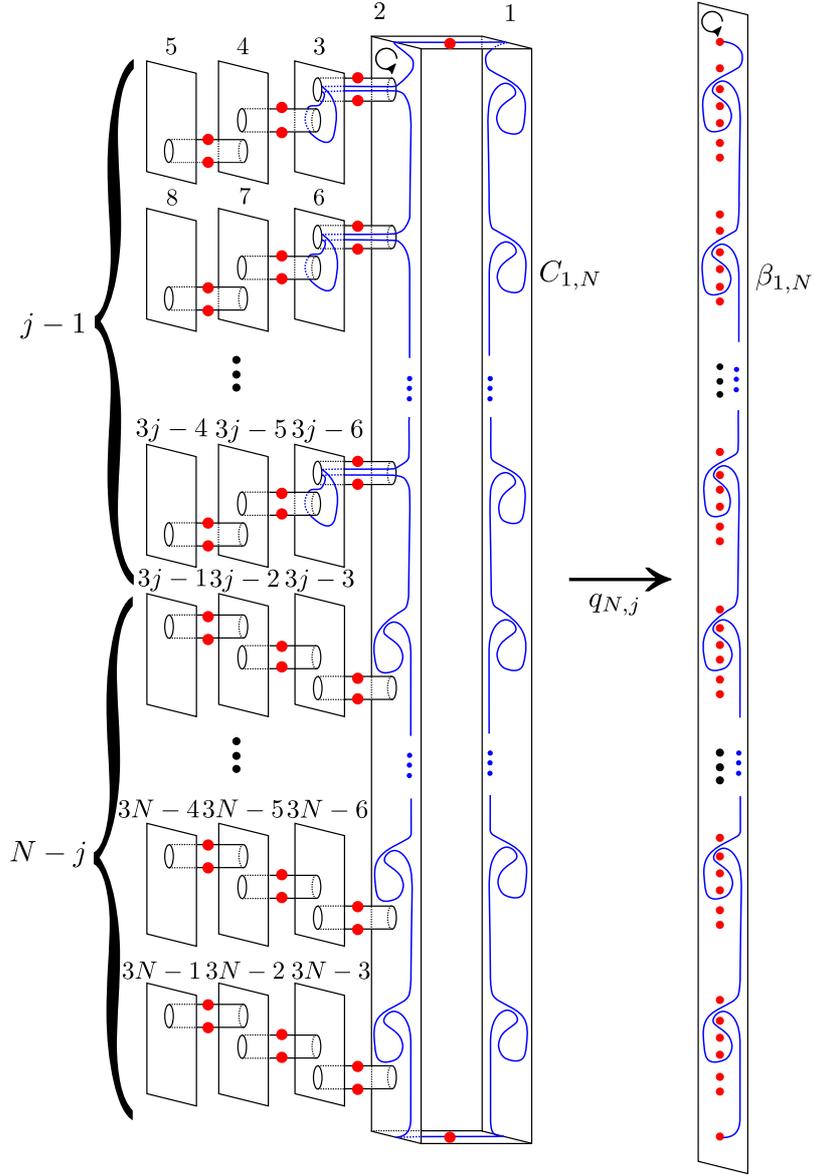}
            			\caption{Covering $q_{N,j}: \Sigma_{0,3N-1} \rightarrow D_{2}^{2}(a_{0})$. Each number represents an index of each sheet of the covering.
				The blue curve $C_{1,N}$ in the cover is the closed component of the lift of the arc generating the half-twist $\beta_{1,N}$}
            			\label{fig: covering}
            		\end{center}
            	\end{figure}

		We show that $X_{N,1}, X_{N,2}, \dots, X_{N, N}$ are mutually diffeomorphic.
		To do this, we will make use of stabilizations of open books and 
		the classification of Stein fillings of a contact manifold.
		The map $pr_{1} \circ p_{N, j}: X_{N, j} \rightarrow D_{1}^{2}$ is a PALF, and
		a monodromy of this PALF is the lift of the braid monodromy (\ref{braid monodromy}) of $S_{N}$ by $q_{N,j}$.
		Write 
		\[
			(t_{C_{1,N}^{j}}, t_{C_{2,N}^{j}}, t_{C_{3,1}^{j}}, t_{C_{3,2}^{j}}, \dots, t_{C_{3,N-1}^{j}}, t_{C_{4,1}^{j}}, t_{C_{4,2}^{j}}, \dots, t_{C_{4,N-1}^{j}}, 
			 t_{C_{5,1}^{j}}, t_{C_{5,2}^{j}}, \dots, t_{C_{5,N-1}^{j}})
		\] 
		for this monodromy of $pr_{1} \circ p_{N, j}$, 
		where each $C_{k,i}^{j}$ is the simple closed curve generating the right-handed Dehn twist 
		as the lift of $\beta_{k, i}$ by $q_{N,j}$ (see Figure \ref{fig: lift}).
		We obtain from this PALF an open book decomposition of $\del X_{N,1}$ 
		with page $\Sigma_{0, 3N-1}$ and monodromy
		\[
		 	\psi_{N, j}:= t_{C_{1,N}^{j}} t_{C_{2,N}^{j}} t_{C_{3,1}^{j}} t_{C_{3,2}^{j}} \cdots 
			t_{C_{3,N-1}^{j}} t_{C_{4,1}^{j}} t_{C_{4,2}^{j}} \cdots t_{C_{4,N-1}^{j}}  t_{C_{5,1}^{j}} t_{C_{5,2}^{j}} 
			\cdots t_{C_{5,N-1}^{j}} .
		\]
               	\begin{figure}[h]
            		\begin{center}
            			\includegraphics[width=450pt]{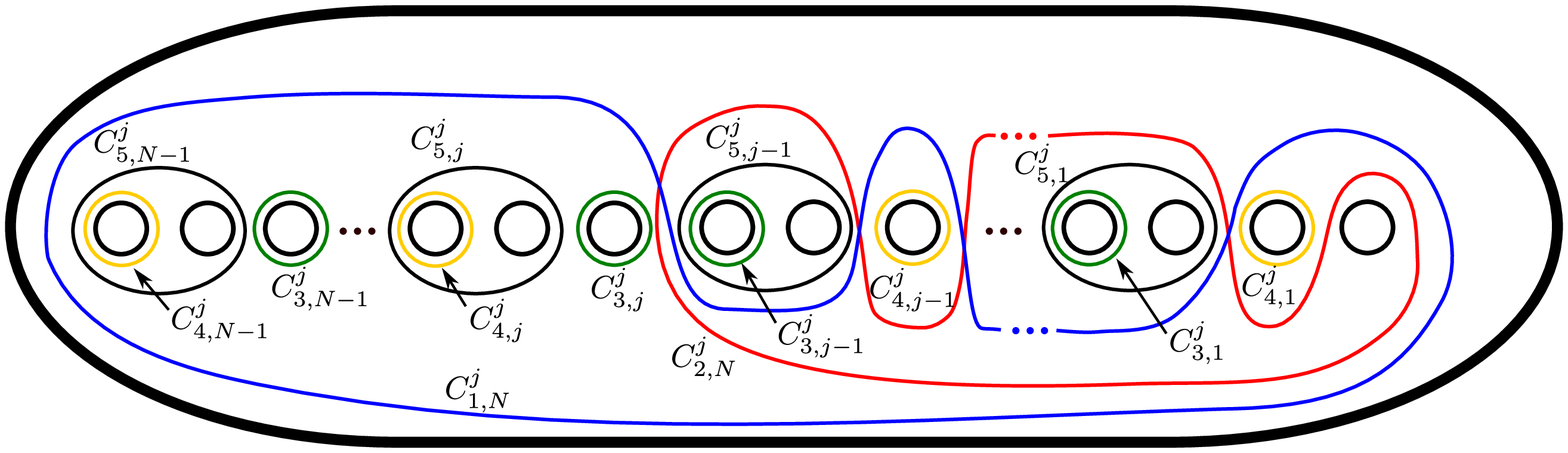}
            			\caption{Vanishing cycles of the PALF $pr_{1} \circ p_{N, j}: X_{N,j} \rightarrow D_{1}^{2}$.}
            			\label{fig: lift}
            		\end{center}
            	\end{figure}
	        	\begin{figure}[h]
            		\begin{center}
            			\includegraphics[width=450pt]{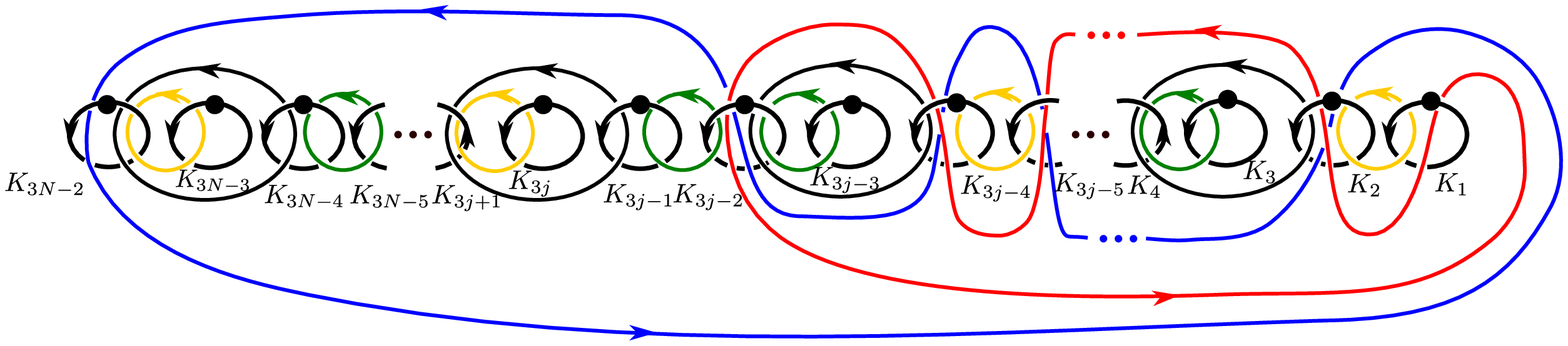}
            			\caption{Kirby diagram of $X_{N, j}$}
            			\label{fig: diagram}
            		\end{center}
		\end{figure}
		Stabilize the open book $(\Sigma_{0, 2N}, \varphi_{N, j})$ $N-1$ times as shown in Figure \ref{fig: how_to_stabilize}. 
		We can easily check that the resulting stabilization is $(\Sigma_{0, 3N-1}, \psi_{N, j})$.
	        	\begin{figure}[h]
            		\begin{center}
            			\includegraphics[width=350pt]{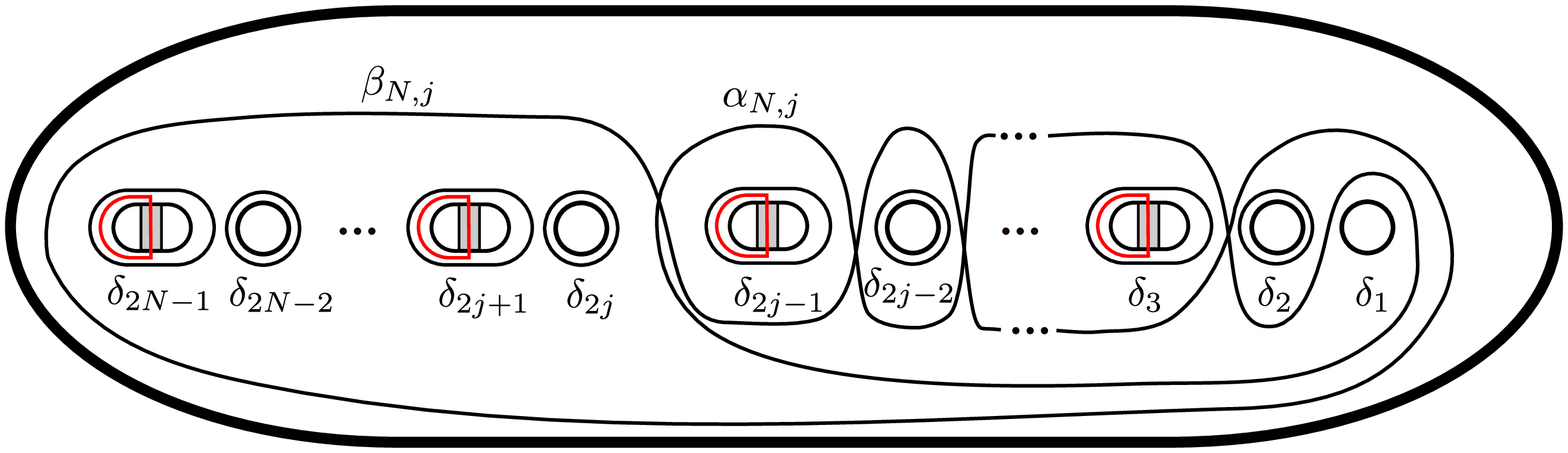}
            			\caption{Stabilization of the open book decomposition $(\Sigma_{0,2N}, \varphi_{N,j})$.
				Each shaded band (resp. red curve) represents an added band (resp. curve generating added Dehn twists) by this stabilizing. }
            			\label{fig: how_to_stabilize}
            		\end{center}
		\end{figure}
		Hence, each $\del X_{N, j}$ is diffeomorphic to $L(2N,1)$.
		Furthermore, since a positive stabilization of an open book supports the same contact structure 
		supported by the previous one, 
		$(\Sigma_{0,3N-1}, \psi_{N, j})$ supports the contact structure $\xi_{N,j}$.		
		Consider $X_{N,j}$ as a Stein filling of $(L(2N, 1), \xi_{N,j})$ and 
		apply to $X_{N,j}$ the classification of Stein fillings of $(L(2N, 1), \xi_{N,j})$.
		By \cite[Corollary 1.3]{PV}, $X_{N,j}$ is diffeomorphic to the disk bundle $X(S^2, -2N)$ over $S^2$ with Euler number $-2N$ if $(N,j) \neq (2,1)$; 
		otherwise either $X(S^2, -4)$ or the rational ball with Euler characteristic $1$. 
		In our case, 
		since $X_{2,1}$ admits the PALF $pr_{1} \circ p_{2,1}$ with fiber $\Sigma_{0,5}$ and $5$ critical points, 
		the Euler characteristic of $X_{2,1}$ is $2$. 
		Thus, every $X_{N, j}$ is diffeomorphic to $X(S^2, -2N)$. 
		
		To finish the proof, we compute the first Chern class of each $(X_{N,j}, J_{N,j})$, 
		where $J_{N, j}$ is the Stein structure associated to $p_{N,j}$.
		To begin with, we draw a Kirby diagram 
		of $X_{N,j}$ from the PALF structure to examine a generator of $H_{2}(X_{N,j}; \Z) \cong \Z$.
		Since the regular fiber of $pr_{1} \circ p_{N, j}$ is $\Sigma_{0,3n-1}$, 
		a Kirby diagram of $X_{N, j}$ has 
		$3N-2$ dotted $1$-handles $K_{1}, K_{2}, \dots, K_{3N-2}$. 
		We give orientations to the dotted circles of these $1$-handles, 
		and the attaching circles $C_{1, N}^{j}, C_{2, N}^{j}, C_{3,1}^{j}, \dots, C_{5, N-1}^{j}$ of these $2$-handles as in Figure \ref{fig: diagram}. 
		These $[K_{1}], \dots, [K_{3N-2}]$ and $[C_{1, N}^{j}], \dots, [C_{5, N-1}^{j}]$ freely generate the chain group $C_{1}(X)$ and $C_{2}(X)$ respectively 
		(cf. \cite[Section 4.2]{GS} and \cite[Section 2.3]{OS}).
		We claim that, for each $j = 1, 2, \dots, N$, 
		\[
			Z_{N, j}:= [C_{1, N}^{j}] - [C_{2, N}^{j}] - \Sigma_{i=j}^{N-1}[C_{3,i}^{j}] - \Sigma_{i=1}^{j-1}[C_{4,i}^{j}] + \Sigma_{i=1}^{j-1}[C_{5,i}^{j}] - \Sigma_{i=j}^{N-1}[C_{5,i}^{j}]
		\]
		is a generator of $H_{2}(X_{N, j}; \Z)$. 
		Let $\del : C_{2}(X) \rightarrow C_{1}(X)$ denote the boundary operator on the two chain groups $C_{1}(X)$ and $C_{2}(X)$ defined by 
		$\del ([C]) := \Sigma _{i=1} ^{3N-2} lk(C, K_{i})[K_{i}]$ on the generators and extended linearly. 
		Here, $lk(C, K_{i})$ is the linking number of $C$ and $K_{i}$.
		We have 
		\begin{align*}
			& \del([C_{1, N}^{j}] - [C_{2, N}^{j}])\\
			= & \{ ([K_{2}]-[K_{4}]) + ([K_{5}]-[K_{7}]) + \cdots + ([K_{3j-4}] -[K_{3j-2}]) + [K_{3N-2}] \}  \\ 
			 &- \{ ([K_{1}] - [K_{2}]) + ([K_{4}]- [K_{5}]) + \cdots + ([K_{3j-5}] - [K_{3j-4}]) + [K_{3j-2}] \} \\
			 = & -[K_{1}] + 2\{ ([K_{2}]-[K_{4}])+([K_{5}]-[K_{7}]) + \cdots + ([K_{3j-4}] - [K_{3j-2}]) \}+ [K_{3N-2}],
		\end{align*}
		\begin{align*}
			& \del(-\Sigma_{i=j}^{N-1}[C_{3,i}^{j}] - \Sigma_{i=1}^{j-1}[C_{4,i}^{j}]) \\
			= &- \{  (-[K_{3j-2}] + [K_{3j-1}]) + (-[K_{3j+1}]+ [K_{3j+2}])+ \cdots + (-[K_{3N-5}]+ [K_{3N-4}])\} \\
			& - \{ (-[K_{1}]+[K_{2}]) +(-[K_{4}]+ [K_{5}]) + \cdots + (-[K_{3j-5}]+ [K_{3j-4}])\} \\
			= & [K_{1}] - \{ ([K_{2}]- [K_{4}]) + ([K_{5}]-[K_{7}]) + \cdots + ([K_{3j-4}] - [K_{3j-2}]) \} \\
			& - \{ ([K_{3j-1}]-[K_{3j+1}]) + ([K_{3j+2}]-[K_{3j+4}]) +\cdots + ([K_{3N-7}]- [K_{3N-5}])\} - [K_{3N-4}], 
		\end{align*}
		 and 
		 \begin{align*}
		 	& \del ( \Sigma_{i=1}^{j-1}[C_{5,i}^{j}] - \Sigma_{i=j}^{N-1}[C_{5,i}^{j}]) \\
			= & -\{ ([K_{2}]-[K_{4}]) + ([K_{5}]-[K_{7}]) + \cdots + ([K_{3j-4}] - [K_{3j-2}])\} \\
			& +\{ ([K_{3j-1}]-[K_{3j+1}]) + ([K_{3j+2}]-[K_{3j+4}]) + \cdots\\ 
			&  + ([K_{3N-7}] - [K_{3N-5}]) + ([K_{3N-4}] - [K_{3N-2}]) \} .
		 \end{align*}
		  Therefore, 
		  $$\del(Z_{N, j}) = \del ([C_{1, N}^{j}] - [C_{2, N}^{j}] - \Sigma_{i=j}^{N-1}[C_{3,i}^{j}] - \Sigma_{i=1}^{j-1}[C_{4,i}^{j}] + \Sigma_{i=1}^{j-1}[C_{5,i}^{j}] - \Sigma_{i=j}^{N-1}[C_{5,i}^{j}]) = 0,$$ 
		  and 
		  $Z_{N, j}$ is an element of $\textrm{Ker}\, \del$.
		  Since $\textrm{Ker}\, \del \cong H_{2}(X_{N, j}; \Z) \cong \Z$ is abelian and 
		  the coefficient of each term in $Z_{N,j} = 
		  [C_{1, N}^{j}] - [C_{2, N}^{j}] - \Sigma_{i=j}^{N-1}[C_{3,i}^{j}] - \Sigma_{i=1}^{j-1}[C_{4,i}^{j}] + \Sigma_{i=1}^{j-1}[C_{5,i}^{j}] - \Sigma_{i=j}^{N-1}[C_{5,i}^{j}]$ 
		  is either $1$ or $-1$, it is a generator of $\textrm{Ker}\, \del$.
		  Now, we can compute $c_{1}(X_{N, j}, J_{N,j})$. 
		  Strictly speaking, 
		  $c_{1}(X_{N,j}, J_{N, j})$ can evaluate on the generator of $H_{2}(X_{N, j}; \Z)$ as follows: 
		  \begin{align*}
		  	& \langle c_{1}(X_{N,j}, J_{N, j}), Z_{N, j} \rangle \\
			= & \langle c_{1}(X_{N,j}, J_{N, j}), 
			[C_{1, N}^{j}] - [C_{2, N}^{j}] - \Sigma_{i=j}^{N-1}[C_{3,i}^{j}] - \Sigma_{i=1}^{j-1}[C_{4,i}^{j}] + \Sigma_{i=1}^{j-1}[C_{5,i}^{j}] - \Sigma_{i=j}^{N-1}[C_{5,i}^{j}] \rangle \\
			= & \rot(C_{1, N}^{j}) - \rot(C_{2, N}^{j}) - \Sigma_{i=j}^{N}\rot(C_{3,i}^{j}) - \Sigma_{i=1}^{j-1}\rot(C_{4,i}^{j}) + \Sigma_{i=1}^{j-1}\rot(C_{5,i}^{j}) - \Sigma_{i=j}^{N}\rot(C_{5,i}^{j})\\
			= & 0 - 0 - \Sigma_{i=j}^{N-1} (+1) - \Sigma_{i=1}^{j-1} (+1) + \Sigma_{i=1}^{j-1} (+1) - \Sigma_{i=j}^{N-1} (+1) \\
			= & -2(N-j)
		\end{align*}
		Therefore, $c_{1}(X_{N,j}, J_{N,j}) \neq c_{1}(X_{N, j'}, J_{N, j'})$ if $j \neq j'$, and the Stein structures $J_{N,1}, J_{N,2}, \dots, J_{N, N}$ are mutually not homotopic.
		\end{proof}
		 
		In the following, we give explicit descriptions of the braids $\beta_{1,N}, \beta_{2,N}, \beta_{3,i}, \beta_{4,i}, \beta_{5,i}$ and 
		covering $q_{N,j}:\Sigma_{0,3N-1}\rightarrow D^{2}_{2}(a_{0})$ in the above proof.
		These are not essential in this article, hence 
		 the reader can skip them if he or she likes.
		
		We give first explicit braid words of the above braids. 
		For convenience, 
		set $$\beta_{1}^{\beta_{2}} := \beta_{2}^{-1} \beta_{1} \beta_{2}$$ 
		for $\beta_{1}, \beta_{2} \in B_{m}$, and 
		$$\tau_{i,j} := \sigma_{i}^{\sigma_{i+1} \sigma_{i+2} \cdots \sigma_{j-1}},\ 
		\taub_{i,j} := \sigma_{i}^{\sigma_{i+1}^{-1} \sigma_{i+2}^{-1} \cdots \sigma_{j-1}^{-1}} \in B_{m},$$
		where $i<j$ (see Figure \ref{fig: tau}).
		Obviously, if $j= i+1$, both $\tau_{i,j}$ and $\taub_{i,j}$ are $\sigma_{i}$.	
		\begin{figure}[b]
            		\begin{center}
       	     			\includegraphics[width=230pt]{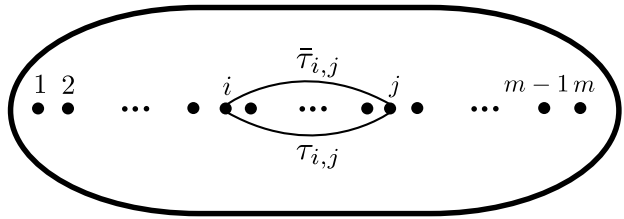}
            			\caption{Generating curves of $\tau_{i,j}$ and $\taub_{i,j}$, where we identify $B_{m}$ with $\M_{0,1}^{m}$. }
          			\label{fig: tau}
            		\end{center}
		\end{figure}
		Define 
		$$ T_{1,N} := {\Pi_{j=1}^{N-1}\{ \tau^{-1}_{2+6(j-1), 5+6(j-1)} \taub^{-1}_{5+6(j-1), 8+6(j-1)}\} \Pi_{j=1}^{N-1}\{ \sigma_{4+6(j-1)} \sigma_{3+6(j-1)}\} }, $$
		$$ \overline{T}_{1,N} := {\Pi_{j=1}^{N-1}\{ \taub_{2+6(j-1), 5+6(j-1)} \tau_{5+6(j-1), 8+6(j-1)}\} \Pi_{j=1}^{N-1}\{ \sigma^{-1}_{4+6(j-1)} \sigma^{-1}_{3+6(j-1)}\} }.$$
		Then, we have 
		$$
            		\beta_{1, N} :=   \sigma_{1}^{T_{1,N}},\  \beta_{2, N} :=   \sigma_{1}^{\overline{T}_{1,N}},
		$$
		$$
            		\beta_{3, i} :=  \tau_{2+6(i-1), 6+6(i-1)}^{\taub_{2+6(i-1), 7+6(i-1)}^{-1}}, \ 
            		\beta_{4, i} :=  \tau_{3+6(i-1), 7+6(i-1)}^{\taub_{2+6(i-1), 7+(6i-1)}}, \ 
            		\beta_{5, i} :=  \tau_{6i-2,6i-1} = \sigma_{6i-2}.
		$$

		Next, we describe the covering $q_{N,j}: \Sigma_{0,3N-1} \rightarrow D_{2}^{2}(a_{0})$ by using a covering monodromy.
		Let $(\gamma'_{1}, \gamma'_{2}, \dots, \gamma'_{6N-4})$ 
		be the standard Hurwitz system for $(D_{2}^{2}(a_{0}) \cap S_{N}, (a_{0},b_{0}) )$.
		By observing Figure \ref{fig: covering}, we can check that a covering monodromy 
		$\rho_{N,1}: \pi_{1}(D_{2}^{2}(a_{0})-S_{N}, (a_{0} , b_{0})) \rightarrow \Es_{3N-1}$ of the covering $q_{N,j}$ is a homomorphism defined by 
		\[
            		\rho_{N,j} (\gamma'_{i})= 
            			\begin{cases}
            				(1\ 2) &  (i=1, 6N-4), \\
            				(2\ 3k) &  (i=2+6(k-1), 3+6(k-1); k=1,2,\dots, j-1), \\
            				(3k\ 3k+1) &  (i=4+6(k-1), 5+6(k-1); k=1,2,\dots, j-1), \\
					(3k+1\ 3k+2) & (i=6+6(k-1), 7+6(k-1); k=1,2,\dots, j-1),\\
					(3\ell+1\ 3\ell +2) &  (i=2+6(\ell-1), 3+6(\ell-1); \ell=j, j+1, \dots, N-1), \\
            				(3\ell\ 3\ell+1) &  (i=4+6(\ell-1), 5+6(\ell-1); \ell=j,j+1,\dots, N-1), \\
					(2\ 3\ell) & (i=6+6(\ell-1), 7+6(\ell-1); \ell=j,j+1,\dots, N-1).
            			\end{cases}
            	\]
		

	\begin{remark}
            	In the above proof, the case of $N=2$ is crucial, so 
            	we explain how the author found the braided surface $S_{2}$. 
            	First, he fixed two different branched coverings $q_{2,1}$ and $q_{2,2}$ and 
		considered liftable braids with respect to both coverings.
            	He observed how corresponding lifts change if we change $q_{2,1}$ into $q_{2,2}$, and 
            	he chose some braids among them to obtain the braided surface $S_{2}$. 
            	Finally, drawing Kirby diagrams of the two corresponding covers branched along $S_{2}$, 
            	he checked whether these covers satisfied the conditions of our theorem.
            	Hence, his construction is very ad hoc. 
            	As far as he knows, there is no systematic construction of such a braided surface.
	\end{remark}

       	\begin{proof}[Proof of Corollary \ref{cor: transverse}]
                	Note that the boundary of a given braided surface $S$ is contained in $\del D_{1}^{2} \times D_{2}^{2}$, 
		and it is the closure of a braid. 
                	Letting $U$ be the core of $D_{1}^{2} \times \del D_{2}^{2}$, 
                	we obtain from the product structure on $\del D_{1}^{2} \times D_{2}^{2}$, 
		an open book decomposition of $ S^3 \approx \del D^{4} \approx \del (D_{1}^{2} \times D_{2}^{2})$ whose page is a disk and binding is $U$.
    		This open book supports the standard contact structure $\xi_{std}$ on $S^3$.
                	Thus, we can regard $\del S$ as a transverse link in $(S^3, \xi_{std})$. 
    	
    		Let $L_{N}$ be the boundary of $S_{N}$ in the proof of Theorem \ref{thm: main}.
    		By the previous argument, $L_{N}$ can be seen as a transverse link in $(S^{3}, \xi_{std})$. 
    		Set $M_{N, j} := \del X_{N, j}$. 
    		$M_{N,1}, M_{N, 2}, \dots , M_{N,N}$ are mutually diffeomorphic to $L(2N, 1)$. 
    		Here, $p_{N, j}: X_{N,j} \rightarrow D^4$ restricts on the boundary $M_{N, j}$ to 
		the simple branched cover of $S^3$ branched along $L_{N}$.
		By the proof of Theorem \ref{thm: main}, 
		the associated open book $(\Sigma_{0, 3N-1}, \psi_{N, j})$ to the covering is 
		adapted to $(L(2N, 1), \xi_{N, j})$. 
		By the classification of tight contact structures on $L(2N, 1)$ in \cite[Theorem 2.1]{Ho}, 
		$\xi_{N,1}, \xi_{N,2}, \dots, \xi_{N, N}$ are mutually not isotopic.    		
    	  \end{proof}

\bibliographystyle{amsplain}

\end{document}